\documentclass[11pt]{article}

\usepackage{amsthm,amsmath,amsfonts,amssymb,graphicx,bm,color}
\usepackage{algorithm}
\usepackage{algorithmic}

\begin{document}

\theoremstyle{definition}
\newtheorem{definition}{Definition}
\newtheorem{lemma}{Lemma}
\newtheorem{theorem}{Theorem}
\newtheorem{corollary}{Corollary}
\newtheorem{example}{Example}
\newtheorem{remark}{Remark}
\newcommand{\dmu}{{\mathrm{d}\mu}}
\newcommand{\dnu}{{\mathrm{d}\nu}}
\newcommand{\dx}{{\mathrm{d}x}}
\newcommand{\dbmx}{{\mathrm{d}\bm{x}}}

\setlength{\baselineskip}{20pt}

\title{Constructing Markov chains with given dependence and marginal stationary distributions}
\author{Tomonari Sei\footnote{Graduate School of Information Science and Technology, The University of Tokyo.}}
\date{July, 2024}
\maketitle

\begin{abstract}
 A method of constructing Markov chains on finite state spaces is provided.
 The chain is specified by three constraints:
 stationarity, dependence and marginal distributions.
The generalized Pythagorean theorem in information geometry plays a central role in the construction.
An algorithm for obtaining the desired Markov chain is described.
 Integer-valued autoregressive processes are considered for illustration.
\end{abstract}

\section{Introduction}

Markov chains are fundamental in time series analysis.
A statistical model of (higher-order) Markov chains is typically described by a parametric family of Markov kernels.
Examples include the mixture transition distribution model \cite{Raftery1985,BerchtoldRaftery2002} and
the variable length Markov model \cite{Buhlmann1999}.
In these cases, the stationary distribution is not directly specified.

By contrast, if a parametric model of marginal stationary distributions is given,
its estimation is relatively easy because the methods for independent data can be formally applied under ergodicity.
Therefore, it is natural to consider a statistical model of Markov chains with given marginal stationary distributions.

In this paper, we provide a method of constructing stationary Markov chains with specified dependence and marginal distributions.
Here, the dependence refers to a central part of the Markov kernel; see Section~\ref{section:main} for the precise definition.
Our construction is based on the exponential family of Markov chains \cite{Nagaoka2005,HW2016}, which is defined in the framework of information geometry \cite{AmariNagaoka}.
In particular, the generalized Pythagorean theorem established by \cite{CCC1987} plays a central role.

The paper is organized as follows.
In Section~\ref{section:preliminaries},
we recall the generalized Pythagorean theorem on Markov chains.
In Section~\ref{section:main}, a method of constructing Markov chains is proposed.
A numerical algorithm and illustrative examples are provided in Section~\ref{section:algorithm}.
Future directions are discussed in Section~\ref{section:discussion}.

\section{Generalized Pythagorean theorem for Markov chains} \label{section:preliminaries}

We recall the definition of exponential families of Markov chains according to \cite{Nagaoka2005,HW2016}
and the generalized Pythagorean theorem proved by \cite{CCC1987}.

Let $\mathcal{X}$ be a finite set.
Let $\mathbb{R}_+$ and $\mathbb{R}_{\geq 0}$ be the set of positive and non-negative numbers, respectively.
The set of all positive probability distributions on $\mathcal{X}$ is denoted by $\mathcal{P}_+(\mathcal{X})\subset \mathbb{R}_+^{\mathcal{X}}$,
where $\mathbb{R}_+^{\mathcal{X}}$ is the set of all functions from $\mathcal{X}$ to $\mathbb{R}_+$.
A (first-order) Markov kernel on $\mathcal{X}$ is a function $w:\mathcal{X}^2\to\mathbb{R}_{\geq 0}$ such that
$\sum_{y\in\mathcal{X}}w(y|x)=1$ for any $x\in\mathcal{X}$.
A distribution $p\in\mathcal{P}_+(\mathcal{X})$ is called a stationary distribution of $w$
if $\sum_{x\in\mathcal{X}} w(y|x)p(x)=p(y)$ for any $y\in\mathcal{X}$.

Suppose that, throughout this section, we have a subset $\mathcal{E}$ of $\mathcal{X}^2$ such that the directed graph $(\mathcal{X},\mathcal{E})$ is strongly connected.
This means that for any $(x,y)\in\mathcal{X}^2$ there exists a path in $(\mathcal{X},\mathcal{E})$ from $x$ to $y$.

\begin{example}\label{example:strongly-connected}
 Consider a four-element set $\mathcal{X}=\{00,01,10,11\}$ and define $\mathcal{E}$ by
 $(ij,kl)\in\mathcal{E}$ if and only if $j=k$.
 Then, $(\mathcal{X},\mathcal{E})$ is strongly connected.
 Indeed, for given $(ij,kl)\in\mathcal{X}^2$, we have a path $ij\to jk\to kl$.
\end{example}

Let $\mathcal{W}=\mathcal{W}(\mathcal{X},\mathcal{E})$
denote the set of all Markov kernels $w$ with the property $\{(x,y)\in\mathcal{X}^2\mid w(y|x)>0\}=\mathcal{E}$.
From the Perron--Frobenius theorem (e.g.\ \cite{Zhang}), each $w\in\mathcal{W}$ has a unique stationary distribution $p_w\in\mathcal{P}_+(\mathcal{X})$.
Denote the joint stationary distribution induced from $w\in\mathcal{W}$ by
\[
 p_w^{(n)}(x_1,\ldots,x_n) = p_w(x_1)w(x_2|x_1)\cdots w(x_n|x_{n-1})
\]
for $n\geq 1$ and $(x_1,\ldots,x_n)\in\mathcal{X}^n$. In particular, $p_w^{(1)}=p_w$.

\begin{lemma}[\cite{CCC1987,Nagaoka2005,HW2016}]
%[Theorem~1 of \cite{Nagaoka2005}, Eq.~(29) of \cite{CCC1987}]
\label{lemma:Nagaoka-thm1}
 Let $f:\mathcal{X}^2\to\mathbb{R}_{\geq 0}$ be given and suppose that $\{(x,y)\mid f(x,y)>0\}=\mathcal{E}$.
 Then, there exist $\kappa\in\mathbb{R}^\mathcal{X}$ and $\psi\in\mathbb{R}$ such that the function $w$ defined by
 \begin{align}
  w(y|x) = f(x,y)\exp(\kappa(y)-\kappa(x)-\psi)
  \label{eq:w-rep}
 \end{align}
 is a Markov kernel.
 Here, $\psi$ is unique and $\kappa$ is unique up to an additive constant.
\end{lemma}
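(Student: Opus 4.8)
The plan is to reduce the statement to the Perron--Frobenius theorem for irreducible nonnegative matrices — the same tool already used above to produce the unique stationary distribution $p_w$. First I would eliminate the exponentials through the substitution $u(x)=\exp(\kappa(x))>0$ and $\lambda=\exp(\psi)>0$. Under it, the defining relation \eqref{eq:w-rep} reads $w(y|x)=f(x,y)\,u(y)/(\lambda\, u(x))$, so the requirement that $w$ be a Markov kernel, namely $\sum_{y\in\mathcal{X}}w(y|x)=1$ for every $x$, becomes the linear eigenvalue equation
\[
 \sum_{y\in\mathcal{X}} f(x,y)\,u(y) = \lambda\, u(x), \qquad x\in\mathcal{X}.
\]
Writing $F=(f(x,y))_{x,y\in\mathcal{X}}$ for the nonnegative matrix of entries $f(x,y)$, this is exactly $Fu=\lambda u$. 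Hence proving the lemma amounts to producing a strictly positive eigenvector $u$ of $F$ together with its positive eigenvalue $\lambda$.

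Next I would observe that $F$ is irreducible. Since $\{(x,y)\mid f(x,y)>0\}=\mathcal{E}$, the directed graph associated with the nonzero entries of $F$ coincides with $(\mathcal{X},\mathcal{E})$, which is strongly connected by hypothesis; irreducibility of $F$ is precisely this strong connectivity. The Perron--Frobenius theorem then applies: the spectral radius $\rho(F)$ is a positive, simple eigenvalue admitting an eigenvector $u$ with strictly positive entries, unique up to a positive scalar multiple. Setting $\lambda=\rho(F)$, taking $u$ to be this Perron eigenvector, and then putting $\psi=\log\lambda$ and $\kappa=\log u$ (entrywise), one obtains functions for which \eqref{eq:w-rep} defines a genuine Markov kernel. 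Note also that $w(y|x)>0$ if and only if $f(x,y)>0$, since the exponential factor is always positive, so the support of $w$ is again $\mathcal{E}$.

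For the uniqueness claims I would appeal to the remaining content of the irreducible Perron--Frobenius theorem: an irreducible nonnegative matrix possesses a strictly positive eigenvector only for the eigenvalue $\rho(F)$, and that eigenvector is unique up to positive scaling. Consequently any admissible $\lambda$ must equal $\rho(F)$, giving uniqueness of $\psi=\log\lambda$; and any admissible positive $u$ is a multiple $c\,u$ of the Perron eigenvector with $c>0$, which corresponds to replacing $\kappa$ by $\kappa+\log c$. Thus $\kappa$ is determined up to an additive constant, as claimed.

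The computations here are routine; the single point that genuinely requires care is invoking the \emph{irreducible} form of the Perron--Frobenius theorem — as opposed to the primitive (aperiodic) form — and in particular using its uniqueness statement that $\rho(F)$ is the only eigenvalue admitting a positive eigenvector. This is exactly what upgrades existence of the pair $(\kappa,\psi)$ to their essential uniqueness, and so it is the crux of the argument.
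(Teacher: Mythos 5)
Your proof is correct and follows essentially the same route as the paper's: both reduce the normalization condition $\sum_y w(y|x)=1$ to the eigenvalue equation $\sum_y f(x,y)u(y)=\lambda u(x)$ and invoke the Perron--Frobenius theorem for the irreducible nonnegative matrix $(f(x,y))_{x,y}$, then set $\psi=\log\lambda$ and $\kappa=\log u$. Your write-up is in fact slightly more complete, since you spell out the uniqueness claims (that $\rho(F)$ is the only eigenvalue with a positive eigenvector, and that the eigenvector is unique up to positive scaling), which the paper's proof leaves implicit.
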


\begin{proof}
 From the Perron--Frobenius theorem, there exist a unique positive eigenvalue $Z$ and
 its eigenvector $\gamma\in\mathbb{R}_+^\mathcal{X}$ such that $\sum_y f(x,y)\gamma(y)=Z\gamma(x)$.
 Then, $w(y|x)=f(x,y)\gamma(y)/(Z\gamma(x))$ is a Markov kernel.
Let $\psi=\log Z$ and $\kappa=\log\gamma$ to obtain (\ref{eq:w-rep}).
\end{proof}

 \begin{remark}[\cite{CCC1987,HW2016}]
 \label{remark:stationary}
 The stationary distribution of $w$ in (\ref{eq:w-rep}) is
 \[
 p_w(x)=\frac{\beta(x)\gamma(x)}{\sum_{x'}\beta(x')\gamma(x')},
 \]
 where $\beta$ and $\gamma$ are the left and right Perron--Frobenius eigenvectors of $f$.
 This fact is sometimes useful; see Section~\ref{section:algorithm}.
\end{remark}

Based on the lemma, we define exponential families of Markov kernels as follows.

\begin{definition}[exponential family of Markov kernels \cite{Nagaoka2005,HW2016}]
 Let $C,F_1,\ldots,F_K:\mathcal{E}\to\mathbb{R}$ be given.
 A family $\{w_\theta\mid\theta\in\mathbb{R}^K\}\subset\mathcal{W}$ is called the {\em exponential family} generated by $C,F_1,\ldots,F_K$
 if it is written as
 \begin{align}
  w_\theta(y|x) = \exp\left(C(x,y)+ \sum_{k=1}^K \theta_kF_k(x,y)+\kappa_\theta(y)-\kappa_\theta(x)-\psi_\theta\right)
  \label{eq:exp-family}
 \end{align}
 for $(x,y)\in\mathcal{E}$, where $\kappa_\theta\in\mathbb{R}^\mathcal{X}$ and $\psi_\theta\in\mathbb{R}$ are determined by Lemma~\ref{lemma:Nagaoka-thm1}.
 The parameter $\theta$ is called the {\em natural parameter} and $\psi_\theta$ is called the {\em potential}.
\end{definition}

\begin{remark}
Another type of exponential families of stochastic processes is discussed in \cite{KSbook},
where stationarity or the Markov property are not necessary in general.
One of the reasons why (\ref{eq:exp-family}) is called an exponential family
is that it has a dually flat structure \cite{Nagaoka2005}.
See Chapter~3 of \cite{AmariNagaoka} for details about dually flat spaces.
Recently, \cite{Nakajima2023} established an extended space of Markov kernels that has the same structure.
\end{remark}

We summarize fundamental properties of the exponential family.
Let $\mathcal{N}\subset\mathbb{R}^\mathcal{E}$ be the set of all functions of the form $f(x,y)=\kappa(y)-\kappa(x)-c$ for some $\kappa\in\mathbb{R}^\mathcal{X}$ and $c\in\mathbb{R}$.

\begin{lemma}[\cite{Nagaoka2005,HW2016}] \label{lemma:exp-family-properties}
 Let $\{w_\theta\mid\theta\in\mathbb{R}^K\}$ be the exponential family generated by $C,F_1,\ldots,F_K$.
 Suppose that $F_1,\ldots,F_K$ are linearly independent modulo $\mathcal{N}$.
 Then, the map $\theta\mapsto w_\theta$ is injective.
The potential $\psi_\theta$ is a strictly convex function of $\theta$.
The derivative $\partial_k\psi_\theta$  is equal to the expectation of $F_k(x,y)$ with respect to the stationary distribution,
where $\partial_k=\partial/\partial\theta_k$.
\end{lemma}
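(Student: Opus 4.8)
The plan is to treat the three assertions separately, anchoring everything on the identification $\psi_\theta = \log Z(\theta)$, where $Z(\theta)$ is the Perron--Frobenius eigenvalue of the nonnegative matrix $A_\theta(x,y) = \exp(C(x,y) + \sum_k \theta_k F_k(x,y))$ supported on $\mathcal{E}$ and zero elsewhere. This is precisely the quantity $Z$ produced in the proof of Lemma~\ref{lemma:Nagaoka-thm1}, with $\exp(\kappa_\theta) = \gamma_\theta$ the associated right eigenvector.

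Injectivity I would dispose of first and directly. Supposing $w_\theta = w_{\theta'}$, I take logarithms in (\ref{eq:exp-family}) and subtract; the common term $C$ cancels, leaving $\sum_k (\theta_k - \theta_k') F_k(x,y) = \lambda(y) - \lambda(x) - c$ on $\mathcal{E}$, where $\lambda = \kappa_{\theta'} - \kappa_\theta$ and $c = \psi_{\theta'} - \psi_\theta$. The right-hand side is by definition an element of $\mathcal{N}$, so linear independence modulo $\mathcal{N}$ forces $\theta = \theta'$.

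For the derivative formula I would apply first-order eigenvalue perturbation. Writing $\beta_\theta,\gamma_\theta$ for the left and right Perron eigenvectors of $A_\theta$ normalized by $\sum_x \beta_\theta(x)\gamma_\theta(x)=1$, differentiating the eigenvalue equation and contracting with $\beta_\theta$ gives the standard identity $\partial_k Z = \beta_\theta^\top(\partial_k A_\theta)\gamma_\theta = \sum_{x,y}\beta_\theta(x)F_k(x,y)A_\theta(x,y)\gamma_\theta(y)$, using $\partial_k A_\theta(x,y)=F_k(x,y)A_\theta(x,y)$. Substituting $A_\theta(x,y)=Z\,w_\theta(y|x)\gamma_\theta(x)/\gamma_\theta(y)$ from Lemma~\ref{lemma:Nagaoka-thm1} collapses the $\gamma$ factors, and the identity $p_{w_\theta}(x)=\beta_\theta(x)\gamma_\theta(x)$ from Remark~\ref{remark:stationary} then yields $\partial_k Z = Z\sum_{x,y}p_{w_\theta}(x)w_\theta(y|x)F_k(x,y)$. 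Dividing by $Z$ gives $\partial_k\psi_\theta = \mathbb{E}[F_k(x,y)]$ under the joint stationary law $p_{w_\theta}^{(2)}$, as claimed.

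Strict convexity is where the work lies, and I expect it to be the main obstacle. Having the gradient as the stationary expectation of the statistics, the Hessian $\partial_j\partial_k\psi_\theta$ is the asymptotic covariance matrix of the additive functionals $\sum_t F_k(x_{t-1},x_t)$ along the chain $w_\theta$---equivalently the second derivative at $s=0$ of $n^{-1}\log\mathbb{E}_\theta[\exp(\sum_t\sum_k s_kF_k)]$, whose limit is $\psi_{\theta+s}-\psi_\theta$. As a covariance matrix it is automatically positive semidefinite, so $\psi_\theta$ is convex. Strictness is the delicate point: the asymptotic variance of a combination $\sum_k a_kF_k$ vanishes exactly when that combination is \emph{cohomologous to a constant}, i.e.\ equals $\lambda(y)-\lambda(x)-c$ for some $\lambda$ and $c$, which is precisely the condition $\sum_k a_kF_k\in\mathcal{N}$. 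Linear independence modulo $\mathcal{N}$ then excludes any nonzero $a$ with degenerate variance, forcing the Hessian to be positive definite. I would establish this coboundary dichotomy either by second-order perturbation of $Z(\theta)$ via the Poisson equation, or by invoking the Donsker--Varadhan variational representation of $\log Z(\theta)$ as a supremum of affine functions of $\theta$; the latter route supplies convexity immediately and isolates the whole difficulty in the characterization of when the supremum fails to be strictly attained.
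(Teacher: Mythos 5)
Your first two steps are sound. The injectivity argument is the paper's own, made explicit: $w_\theta=w_{\theta'}$ forces $\sum_k(\theta_k-\theta_k')F_k$ into $\mathcal{N}$, contradicting independence. Your gradient derivation is correct but follows a genuinely different route from the paper: you differentiate the Perron--Frobenius eigenvalue $Z(\theta)$ and contract with the left eigenvector, using Remark~\ref{remark:stationary}, whereas the paper differentiates the normalization identity $\sum_y w_\theta(y|x)=1$ and averages against $p_{w_\theta}$, letting stationarity cancel the $\partial_k\kappa_\theta$ terms. Both are valid.

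The genuine gap is strict convexity, which you yourself identify as ``where the work lies'' and then do not carry out. The two claims that bear all the weight --- (a) that $a^\top(\nabla^2\psi_\theta)a$ equals the asymptotic variance of the additive functional $\sum_t\sum_k a_kF_k(x_{t-1},x_t)$, and (b) the coboundary dichotomy, that this variance vanishes only if $\sum_k a_kF_k\in\mathcal{N}$ --- are each left at the level of ``I would establish this either by \dots or by \dots,'' with neither route executed. Claim (a) is not a formality: pointwise convergence of the prelimit cumulant generating functions $n^{-1}\log\mathbb{E}_\theta[\exp(\sum_t\sum_k s_kF_k)]$ to $\psi_{\theta+s}-\psi_\theta$ does give convexity of $\psi_\theta$ (limits of convex functions are convex), but second derivatives of convex functions need not converge, and limits of strictly convex functions can fail to be strictly convex; so strictness cannot be read off the limit and genuinely requires second-order eigenvalue perturbation or the Poisson-equation/martingale decomposition --- precisely the work deferred. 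By contrast, the paper closes the issue in a few lines: differentiating $\sum_y w_\theta(y|x)=1$ twice in $\theta$ and taking stationary expectations yields the Fisher-information form (\ref{eq:Fisher}) of the Hessian, whence
\[
 a^\top(\nabla^2\psi_\theta)a=\sum_{(x,y)\in\mathcal{E}}p_{w_\theta}^{(2)}(x,y)\Bigl(\sum_k a_k\partial_k\log w_\theta(y|x)\Bigr)^2,
\]
and since $\sum_k a_k\partial_k\log w_\theta(y|x)=\sum_k a_kF_k(x,y)+\lambda(y)-\lambda(x)-c$ with $\lambda=\sum_k a_k\partial_k\kappa_\theta$ and $c=\sum_k a_k\partial_k\psi_\theta$, vanishing of this quadratic form would place $\sum_k a_kF_k$ in $\mathcal{N}$, contradicting independence. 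In other words, the dichotomy you want to invoke is, on a finite state space, an immediate algebraic corollary of the Fisher-information identity, with no asymptotic analysis needed; your plan can be completed, but as written the central assertion of the lemma is not proved.
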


\begin{proof}
 The injectivity follows from the linear independence assumption.
 By taking derivatives of the identity $\sum_y w_\theta(y|x)=1$ with respect to $\theta$
 and then taking expectation with respect to $p_{w_\theta}$, we obtain the gradient and Hessian of $\psi_\theta$ as
 \[
 \partial_k\psi_\theta = \sum_{(x,y)\in\mathcal{E}} p_{w_\theta}^{(2)}(x,y)F_k(x,y)
 \]
 and
 \begin{align}
 \partial_k\partial_l\psi_\theta = \sum_{(x,y)\in\mathcal{E}} p_{w_\theta}^{(2)}(x,y)(\partial_k\log w_\theta(y|x))(\partial_l\log w_\theta(y|x)).
 \label{eq:Fisher}
 \end{align}
 The functions $\partial_k\log w_\theta(y|x)=F_k(x,y)+\partial_k\kappa_\theta(y)-\partial_k\kappa_\theta(x)-\partial_k\psi_\theta$
 for $1\leq k\leq K$ are linearly independent in $\mathbb{R}^{\mathcal{E}}$ because $F_1,\ldots,F_K$ are linearly independent modulo $\mathcal{N}$.
 Thus, the Hessian $\partial_k\partial_l\psi_\theta$ is positive definite and $\psi_\theta$ is strictly convex.
\end{proof}

\begin{remark}
 The right hand side of (\ref{eq:Fisher}) is called the Fisher information matrix,
 which is defined for any families not limited to exponential families.
\end{remark}

%The following lemma is crucial for proving the main results in the subsequent section.
Define the divergence rate of Markov chains by
\[
 D(v|w) = \sum_{(x,y)\in\mathcal{E}}p_v^{(2)}(x,y)\log\frac{v(y|x)}{w(y|x)},\quad v,w\in\mathcal{W},
\]
which is nonnegative and becomes zero if and only if $v=w$.
It is not difficult to see that
the divergence rate is the limit of the normalized Kullback--Leibler divergence:
\[
 \lim_{n\to\infty}\frac{1}{n}\sum_{x_1,\ldots,x_n}p_v^{(n)}(x_1,\ldots,x_n)\log\frac{p_v^{(n)}(x_1,\ldots,x_n)}{p_w^{(n)}(x_1,\ldots,x_n)} = D(v|w).
\]

\begin{lemma}[Generalized Pythagorean theorem for Markov chains; Lemma~1 of \cite{CCC1987}] \label{lemma:Pythagorean}
 Let $E=\{w_\theta\mid\theta\in\mathbb{R}^K\}$ be the exponential family generated by $C,F_1,\ldots,F_K$,
 where $F_1,\ldots,F_K$ are linearly independent modulo $\mathcal{N}$.
 For given $\mu_1,\ldots,\mu_K\in\mathbb{R}$,
 let $M=M(\mu_1,\ldots,\mu_K)\subset\mathcal{W}$ be the set of all $w\in\mathcal{W}$ satisfying
 \begin{align}
   \sum_{(x,y)\in\mathcal{E}} p_w^{(2)}(x,y)F_k(x,y) = \mu_k,\ \ 1\leq k\leq K.
  \label{eq:mixture}
 \end{align}
 If $M\neq\emptyset$, then there exists a unique $w_*\in M\cap E$.
 Furthermore, the relation
 \begin{align}
 D(w|w_*) + D(w_*|v) = D(w|v)
 \label{eq:Pythagorean}
 \end{align}
 holds for any $w\in M$ and $v\in E$.
\end{lemma}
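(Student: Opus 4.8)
The plan is to reduce everything to a single algebraic decomposition of the divergence rate and then to isolate the one genuinely analytic point, namely existence of the projection.

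First I would record the key identity. For an arbitrary $u\in\mathcal{W}$ and any $w_\theta\in E$, I substitute the representation (\ref{eq:exp-family}) into the definition of $D(u|w_\theta)$. The term $\kappa_\theta(y)-\kappa_\theta(x)$ integrates to zero against $p_u^{(2)}$ because the stationary distribution $p_u$ is the common marginal of both coordinates, and the constant $\psi_\theta$ factors out since $p_u^{(2)}$ is a probability distribution. Writing $\mu^u_k=\sum_{(x,y)\in\mathcal{E}}p_u^{(2)}(x,y)F_k(x,y)$ for the stationary expectation vector of $u$, this yields
\[
D(u|w_\theta)=A(u)-\sum_{k=1}^K\theta_k\mu^u_k+\psi_\theta,
\]
where $A(u)=\sum_{(x,y)\in\mathcal{E}}p_u^{(2)}(x,y)\log u(y|x)-\sum_{(x,y)\in\mathcal{E}}p_u^{(2)}(x,y)C(x,y)$ depends only on $u$. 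The crucial feature is that the $\theta$-dependence enters only through the affine expression $-\langle\theta,\mu^u\rangle+\psi_\theta$.

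Granting existence of $w_*=w_{\theta_*}\in M\cap E$ for the moment, the Pythagorean relation (\ref{eq:Pythagorean}) is then pure bookkeeping. For $w\in M$ we have $\mu^w=\mu$, so applying the identity with $v=w_\theta\in E$ gives $D(w|v)=A(w)-\langle\theta,\mu\rangle+\psi_\theta$ and likewise $D(w_*|v)=A(w_*)-\langle\theta,\mu\rangle+\psi_\theta$; setting $v=w_*$ and using $D(w_*|w_*)=0$ determines $A(w_*)=\langle\theta_*,\mu\rangle-\psi_{\theta_*}$. Combining these evaluations, the cross terms cancel and $D(w|w_*)+D(w_*|v)=D(w|v)$ drops out directly. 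Uniqueness of $w_*$ in $M\cap E$ is immediate from Lemma~\ref{lemma:exp-family-properties}: on $E$ the expectation map is $\mu^{w_\theta}=\nabla\psi_\theta$, and strict convexity of $\psi_\theta$ makes $\nabla\psi_\theta$ injective, so at most one $\theta$ can satisfy $\nabla\psi_\theta=\mu$.

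The main obstacle is existence, i.e.\ showing that the gradient equation $\nabla\psi_{\theta_*}=\mu$ is solvable whenever $M\neq\emptyset$. I would attack this variationally by minimizing the strictly convex function $g(\theta)=\psi_\theta-\langle\theta,\mu\rangle$, whose stationary points are exactly the desired projections. Its infimum is attained iff $g$ is coercive, and here the hypothesis $M\neq\emptyset$ must be converted into the statement that $\mu$ lies in the interior of the set $U=\{\mu^u:u\in\mathcal{W}\}$ of attainable expectation vectors. I would first identify $U$ with the image under the linear map $P\mapsto(\langle F_k,P\rangle)_k$ of the relatively open polytope of positive distributions on $\mathcal{E}$ with equal marginals; the linear independence of $F_1,\ldots,F_K$ modulo $\mathcal{N}$ (whose orthogonal complement is precisely the space of equal-marginal, zero-sum directions) forces this image to be full dimensional, so $U$ is open and $\mu\in U=\mathrm{int}\,U$. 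Coercivity then follows from the directional-slope estimate $\lim_{s\to\infty}\langle\nabla\psi_{sd},d\rangle=\sup_{\nu\in U}\langle\nu,d\rangle>\langle\mu,d\rangle$ for every $d\neq0$, the supremum being the support function of $U$ and the strict inequality being exactly the statement that $\mu$ is interior. Establishing this asymptotic slope is the technical heart: it amounts to the convex-conjugacy (or Perron--Frobenius) fact that $\psi_{sd}/s$ converges to the maximal mean-weight $\sup_{\nu\in U}\langle\nu,d\rangle$ as $s\to\infty$, reflecting the concentration of $w_{sd}$ on the $\langle F,d\rangle$-maximizing cycles. With coercivity in hand the minimizer $\theta_*$ exists, $\nabla\psi_{\theta_*}=\mu$, and $w_{\theta_*}\in M\cap E$ completes the argument.
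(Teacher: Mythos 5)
Your proposal is correct, and for the hard part (existence) it takes a genuinely different route from the paper. The Pythagorean identity and uniqueness are handled by essentially the same algebra in both arguments: your affine decomposition $D(u|w_\theta)=A(u)-\langle\theta,\mu^u\rangle+\psi_\theta$ is the same cancellation the paper performs directly on $D(w|w_*)+D(w_*|v)-D(w|v)$, and your uniqueness-via-injectivity-of-$\nabla\psi_\theta$ is interchangeable with the paper's uniqueness-via-the-identity-itself. The divergence is in the existence proof. The paper works in the \emph{primal} space: it minimizes $G(p)=\sum_{(x,y)}p(x,y)\log\bigl(p(y|x)/v_0(y|x)\bigr)$ over the mixture family inside the compact closure of the stationary joint distributions $\mathcal{P}_{\rm s}$, proving continuity, strict convexity, and steepness at the boundary by elementary explicit computations, and then reads off the exponential form from Lagrange stationarity. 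You instead work in the \emph{dual} (natural parameter) space, minimizing $g(\theta)=\psi_\theta-\langle\theta,\mu\rangle$ and proving coercivity from two ingredients: (i) openness of the attainable-mean set $U$, which you correctly reduce to linear independence of $F_1,\ldots,F_K$ modulo $\mathcal{N}$ (your identification of $\mathcal{N}$ as the annihilator of the zero-sum, equal-marginal directions is exactly right, and it gives $M\neq\emptyset\Rightarrow\mu\in\mathrm{int}\,U$); and (ii) the zero-temperature/tropical limit of the Perron root, $\psi_{sd}/s\to\sup_{\nu\in U}\langle\nu,d\rangle$ (the maximal cycle mean of $\sum_k d_kF_k$). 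Ingredient (ii) is true, but it is the exact counterpart of the paper's steepness computation and carries the real analytic work; in your write-up it is asserted (with the correct reason — concentration on weight-maximizing cycles) rather than proved, so a complete version of your argument would need to supply that Perron--Frobenius asymptotic, e.g.\ via $\mathrm{tr}(A(s)^n)$ bounds over closed paths. What each approach buys: yours connects directly to Corollary~\ref{corollary:m-projection} and Algorithm~\ref{algorithm:I-projection} (existence becomes attainment of the very optimization the algorithm solves, and the role of $M\neq\emptyset$ as an interior-point condition is transparent); the paper's stays entirely elementary and self-contained, trading the eigenvalue asymptotics for compactness of $\mathrm{cl}(\mathcal{P}_{\rm s})$ plus a boundary-derivative estimate.
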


The most challenging part of the lemma is existence of $w_*$.
The lemma is a particular case of Lemma~1 of \cite{CCC1987}.
We give a proof in Appendix for ease of reference.

There are a couple of consequences of (\ref{eq:Pythagorean}).
Let $E$ and $M$ be defined as in Lemma~\ref{lemma:Pythagorean}.
If $v\in E$ is given a priori,
the minimization problem
\begin{align}
 \mathop{\rm Minimize}_{w\in M}\ \ D(w|v)
 \label{eq:e-projection}
\end{align}
has a unique solution $w_*\in M\cap E$.
The solution $w_*$ is called the {\em Markov $I$-projection} of $v$ onto $M$ in \cite{CCC1987}.
Similarly, if $w\in M$ is given a priori, the minimization problem
\begin{align}
 \mathop{\rm Minimize}_{v\in E}\ \ D(w|v)
 \label{eq:m-projection}
\end{align}
has a unique solution $w_*\in M\cap E$.
The projections (\ref{eq:e-projection}) and (\ref{eq:m-projection}) are also called 
e-projection and m-projection, respectively. See Section~3.5 of \cite{AmariNagaoka}.

The problem (\ref{eq:m-projection}) is written in terms of the natural parameter $\theta$ as follows.

\begin{corollary} \label{corollary:m-projection}
 Under the same notation as Lemma~\ref{lemma:Pythagorean}, the natural parameter of $w_*\in M\cap E$ is the solution of
 \[
  \mathop{\rm Minimize}_{\theta\in\mathbb{R}^K}
  \ \ \psi_\theta - \sum_{k=1}^K\theta_k \mu_k.
 \]
\end{corollary}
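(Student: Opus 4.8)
The plan is to interpret the objective as the natural convex program dual to the moment constraints defining $M$, and to drive everything from the strict convexity of the potential already recorded in Lemma~\ref{lemma:exp-family-properties}. Write $G(\theta)=\psi_\theta-\sum_{k=1}^K\theta_k\mu_k$ for the function to be minimized. Subtracting the linear term $\sum_k\theta_k\mu_k$ leaves the Hessian of $\psi_\theta$ unchanged, so $G$ inherits strict convexity. The crucial structural consequence is that for a differentiable strictly convex function, \emph{any} stationary point is automatically its unique global minimizer; hence it will suffice to exhibit one stationary point of $G$ and identify it with the natural parameter of $w_*$.

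First I would compute the gradient. By Lemma~\ref{lemma:exp-family-properties}, $\partial_k\psi_\theta$ equals the expectation of $F_k(x,y)$ under the joint stationary distribution $p_{w_\theta}^{(2)}$, so that
\[
\partial_k G(\theta)=\partial_k\psi_\theta-\mu_k=\sum_{(x,y)\in\mathcal{E}}p_{w_\theta}^{(2)}(x,y)F_k(x,y)-\mu_k.
\]
The key observation is then immediate: the stationarity condition $\partial_k G(\theta)=0$ for all $k$ is \emph{identical} to the defining constraints (\ref{eq:mixture}) of $M$. Consequently $\theta$ is a stationary point of $G$ if and only if $w_\theta\in M$, that is, if and only if $w_\theta\in M\cap E$.

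It remains to invoke existence. Since $M\neq\emptyset$ by hypothesis, Lemma~\ref{lemma:Pythagorean} guarantees a unique $w_*\in M\cap E$; let $\theta^*$ denote its natural parameter, which is well defined by the injectivity asserted in Lemma~\ref{lemma:exp-family-properties}. By the equivalence just established, $\theta^*$ is a stationary point of $G$, and by strict convexity it is the unique global minimizer of $G$. This identifies the natural parameter of $w_*$ with the solution of the stated minimization problem, completing the argument.

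The one point deserving care is that one should \emph{not} attempt to argue coercivity of $G$ separately: a strictly convex function need not attain its infimum in general, so a direct ``minimizer exists, hence satisfies the first-order condition'' route would require extra work. This obstacle is neatly bypassed because Lemma~\ref{lemma:Pythagorean} supplies a stationary point outright, and for a differentiable strictly convex function a stationary point is necessarily the global minimizer. Thus the existence of the I-projection does double duty, both producing $w_*$ and certifying that the dual program is solvable.
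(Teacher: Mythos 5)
Your proof is correct, but it takes a genuinely different route from the paper's. The paper fixes an arbitrary $w\in M$ and expands $D(w|w_\theta)$ using the exponential-family form of $w_\theta$: the moment constraints (\ref{eq:mixture}) and stationarity of $p_w^{(2)}$ kill the $F_k$ and $\kappa_\theta$ terms, leaving $D(w|w_\theta)=A+\psi_\theta-\sum_k\theta_k\mu_k$ with $A$ independent of $\theta$; since the Pythagorean identity (\ref{eq:Pythagorean}) shows $v\mapsto D(w|v)$ is uniquely minimized over $E$ at $w_*$, the natural parameter of $w_*$ minimizes the stated objective. You instead argue through first-order conditions: by Lemma~\ref{lemma:exp-family-properties} the gradient of $G(\theta)=\psi_\theta-\sum_k\theta_k\mu_k$ vanishes exactly when $w_\theta$ satisfies (\ref{eq:mixture}), i.e.\ when $w_\theta\in M\cap E$, and strict convexity of $\psi_\theta$ promotes the stationary point supplied by Lemma~\ref{lemma:Pythagorean} to the unique global minimizer. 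What each approach buys: the paper's computation identifies the objective as the divergence $D(w|\cdot)$ itself up to an additive constant, making the m-projection interpretation of (\ref{eq:m-projection}) explicit, and it never needs derivatives of $\psi_\theta$; your argument never touches the Pythagorean identity, instead exposing the problem as the Lagrangian dual of the moment-constrained problem, and it makes transparent that solvability of this dual is exactly equivalent to $M\cap E\neq\emptyset$. Your closing remark is also well taken and is not a defect: strict convexity alone does not yield existence of a minimizer (no coercivity is established), so both proofs must lean on the existence part of Lemma~\ref{lemma:Pythagorean} --- the paper does so implicitly through (\ref{eq:Pythagorean}), while you do so explicitly by using $w_*$ as the stationary point.
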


\begin{proof}
 For $w\in M$ and $v=w_\theta\in E$, we obtain
 \begin{align*}
  D(w|w_\theta)
  &= \sum_{(x,y)\in\mathcal{E}}p_w^{(2)}(x,y)
  \log\frac{w(y|x)}{e^{C(x,y)+\sum_{k=1}^K\theta_kF_k(x,y)+\kappa_\theta(y)-\kappa_\theta(x)-\psi_\theta}}
  \\
  &= A - \sum_{k=1}^K \theta_k \mu_k+\psi_\theta
 \end{align*}
 where $A$ is a term not depending on $\theta$.
\end{proof}

%In \cite{HW2016}, an estimator of $\theta$ based on Corollary~\ref{corollary:m-projection} is discussed,
%where $\mu_k$ is set to $\bar{F}_k=(n-1)^{-1}\sum_{t=1}^{n-1} F_k(x_t,x_{t+1})$ for given observation $\{x_t\}_{t=1}^n$.

\section{Main results} \label{section:main}

\subsection{First-order Markov chains}

We begin with first-order Markov chains.
Assume $\mathcal{E}=\mathcal{X}^2$ throughout this subsection.
In particular, $\mathcal{W}=\mathcal{W}(\mathcal{X},\mathcal{X}^2)$ is the set of all strictly positive Markov kernels.

\begin{theorem} \label{theorem:1st}
Let $H:\mathcal{X}^2\to\mathbb{R}$ and $r\in\mathcal{P}_+(\mathcal{X})$ be given.
Then, there exist functions $\kappa\in\mathbb{R}^\mathcal{X}$ and $\delta\in\mathbb{R}^\mathcal{X}$ such that
a function
\begin{align}
 w(y|x) = \exp(H(x,y)+\kappa(y)-\kappa(x)-\delta(y)),\quad (x,y)\in \mathcal{X}^2,
 \label{eq:min-info}
\end{align}
is a Markov kernel and its stationary distribution is
\begin{align}
p_w(x)=r(x),\quad x\in\mathcal{X}.
\label{eq:marginal}
\end{align}
Here, $\delta$ is unique and $\kappa$ is unique up to an additive constant.
\end{theorem}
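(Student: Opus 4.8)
The plan is to recognize the two requirements on $w$---being a Markov kernel and having stationary distribution $r$---as a pair of marginal constraints on the joint distribution $p_w^{(2)}$, and then to read off the conclusion from the generalized Pythagorean theorem (Lemma~\ref{lemma:Pythagorean}). For any $w\in\mathcal{W}$ the $x$-marginal of $p_w^{(2)}(x,y)=p_w(x)w(y|x)$ equals $p_w$, and the $y$-marginal equals $p_w$ as well by stationarity; hence requiring $p_w=r$ is the same as requiring both marginals of $p_w^{(2)}$ to equal $r$. Fixing a reference state $y_*\in\mathcal{X}$, I would encode this as the $K=|\mathcal{X}|-1$ linear constraints $\sum_{(x,y)\in\mathcal{X}^2}p_w^{(2)}(x,y)\mathbf{1}[y=y_0]=r(y_0)$ for $y_0\neq y_*$; the remaining constraint $p_w(y_*)=r(y_*)$ is then automatic because both sides sum to one.

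Next I would take $E$ to be the exponential family generated by $C=H$ and the features $F_{y_0}(x,y)=\mathbf{1}[y=y_0]$, $y_0\neq y_*$, with targets $\mu_{y_0}=r(y_0)$, so that $M=M((r(y_0))_{y_0\neq y_*})$ is precisely the set of $w\in\mathcal{W}$ with $p_w=r$. These features are linearly independent modulo $\mathcal{N}$: a nontrivial combination is a nonconstant function of $y$ vanishing at $y_*$, whereas the only functions of $y$ alone lying in $\mathcal{N}$ are constants (independence of $x$ in $\kappa(y)-\kappa(x)-c$ forces $\kappa$ to be constant). Writing out (\ref{eq:exp-family}) for this family, the $\theta$-term is a function of $y$ only, so absorbing it together with the constant $\psi_\theta$ into a single function $\delta(y):=\psi_\theta-\sum_{y_0\neq y_*}\theta_{y_0}\mathbf{1}[y=y_0]$ turns $w_\theta$ into exactly the shape (\ref{eq:min-info}). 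Thus every member of $E$ has the desired form.

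It then remains to invoke Lemma~\ref{lemma:Pythagorean}, whose only hypothesis to check is $M\neq\emptyset$; this is immediate, since the i.i.d.\ kernel $w(y|x)=r(y)$ is strictly positive and has $r$ as its stationary distribution, so $w\in M$. The lemma yields a unique $w_*\in M\cap E$, a Markov kernel of the form (\ref{eq:min-info}) with $p_{w_*}=r$, which settles existence. For uniqueness I would set $G(x,y)=\log w_*(y|x)-H(x,y)=\kappa(y)-\kappa(x)-\delta(y)$; differencing in the first slot gives $G(x,y)-G(x',y)=\kappa(x')-\kappa(x)$ independent of $y$, pinning down $\kappa$ up to an additive constant, after which $\delta(y)=\kappa(y)-\kappa(x)-G(x,y)$ is determined and insensitive to that constant. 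The genuine difficulty---the existence of a point in $M\cap E$---is entirely delegated to Lemma~\ref{lemma:Pythagorean}, so for this theorem the only real tasks are the nonemptiness of $M$ and the linear independence of the features, both routine. As a cross-check, the statement is equivalent to the Sinkhorn/matrix-scaling fact that the strictly positive matrix $e^{H(x,y)}$ admits a rescaling $\phi(x)e^{H(x,y)}\chi(y)$ with both marginals equal to $r$, with $\phi,\chi$ unique up to reciprocal constants.
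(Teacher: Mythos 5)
Your proof is correct and follows essentially the same route as the paper: the same exponential family generated by $C=H$ with indicator features of $y$ (up to a sign convention), the same mixture family encoding $p_w=r$, nonemptiness of $M$ via the i.i.d.\ kernel $w(y|x)=r(y)$, and the same invocation of Lemma~\ref{lemma:Pythagorean}. You in fact go slightly beyond the paper's proof by explicitly verifying the linear independence of the features modulo $\mathcal{N}$ (a hypothesis of the lemma) and by proving the uniqueness of $\delta$ and of $\kappa$ up to an additive constant, both of which the paper leaves implicit.
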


\begin{proof}
 We label the elements of $\mathcal{X}$ as $\mathcal{X}=\{\xi_1,\ldots,\xi_m\}$, where $m=|\mathcal{X}|$.
 Consider an exponential family $E$ generated by $C(x,y)=H(x,y)$
 and $F_i(x,y)=-I_{\{\xi_i\}}(y)$ for $1\leq i\leq m-1$, where $I_{\{\xi_i\}}(y)=1$ if $y=\xi_i$ and $0$ otherwise.
 Each element of $E$ is written as
 \[
  w_\theta(y|x) = \exp\left(H(x,y) - \sum_{i=1}^{m-1}\theta_i I_{\{\xi_i\}}(y) + \kappa_\theta(y)-\kappa_\theta(x)-\psi(\theta)\right),
 \]
 which is of the form (\ref{eq:min-info}) if we set $\kappa(y)=\kappa_\theta(y)$ and $\delta(y)=\sum_{i=1}^{m-1}\theta_iI_{\xi_i}(y)+\psi_\theta$.
 On the other hand, let $M$ be the set of all $w\in\mathcal{W}$ satisfying
 \[
  \sum_{(x,y)\in\mathcal{X}^2} p_w^{(2)}(x,y)F_i(x,y) = -r(\xi_i),\quad i=1,\ldots,m-1.
 \]
 This condition is equivalent to (\ref{eq:marginal}).
 Since a trivial Markov kernel $w(y|x)=r(y)$ obviously belongs to $M$, we have $M\neq\emptyset$.
 By Lemma~\ref{lemma:Pythagorean}, there exists a unique $w\in M\cap E$, which satisfies (\ref{eq:min-info}) and (\ref{eq:marginal}).
\end{proof}

\begin{definition}[minimum information Markov kernel] \label{definition:min-info}
 Let $H:\mathcal{X}^2\to\mathbb{R}$ and $r\in\mathcal{P}_+(\mathcal{X})$ be given.
 The Markov kernel $w$ determined by (\ref{eq:min-info}) and (\ref{eq:marginal}) is called 
 a {\em minimum information Markov kernel} generated by $H$ and $r$.
 The function $H$ is referred to as {\em dependence}.
% \\
% (ii)
% Let $h_1,\ldots,h_K:\mathcal{X}^2\to\mathbb{R}$ and $r\in\mathcal{P}_+(\mathcal{X})$ be given.
%A family of minimum information Markov kernel generated by $\sum_{k=1}^K\theta_k h_k$ and $r$
%is called a {\em minimum information Markov model} generated by $h_1,\ldots,h_K$ and $r$.
\end{definition}

The term ``minimum information'' comes from the minimization problems (\ref{eq:e-projection}) and (\ref{eq:m-projection}); see \cite{BW2014,SY2023} for details.
Note that the dependence $H$ is not unique even if the Markov kernel $w$ is specified.
Indeed, $H(x,y)$ and $H(x,y)+\kappa_0(y)-\kappa_0(x)-\delta_0(y)$ for any $\kappa_0$ and $\delta_0$ induce the same $w$.

%We provide an example of integer-valued autoregressive processes.

\begin{example}[An integer-valued autoregressive model] \label{example:INAR(1)}
 Consider a state space $\mathcal{X}=\{0,1,\cdots,N\}$ with $N\geq 1$.
 Let $H(x,y)=\alpha xy$ and $r(x)=\binom{N}{x}\nu^x(1-\nu)^{N-x}$, where $\alpha\in\mathbb{R}$ and $\nu\in(0,1)$ are parameters.
 The minimum information Markov kernel generated by $H$ and $r$
 is a kind of autoregressive models with binomial marginals.
 Numerical results are provided in Section~\ref{section:algorithm}.
\end{example}

\begin{remark} \label{remark:Sinkhorn}
Theorem~\ref{theorem:1st} is closely related to a classical matrix scaling problem \cite{SinkhornKnopp1967}.
The problem is to find $\beta\in\mathbb{R}_+^\mathcal{X}$ and $\gamma\in\mathbb{R}_+^\mathcal{X}$ such that
 \[
 \sum_y e^{H(x,y)}\beta(x)\gamma(y) = r(x),
 \quad \sum_x e^{H(x,y)}\beta(x)\gamma(y) = r(y),
 \]
 for given $H$ and $r$.
 We have the solution $\beta(x)=e^{-\kappa(x)}$ and $\gamma(y)=e^{\kappa(y)-\delta(y)}$ by using $\kappa$ and $\delta$ in Theorem~\ref{theorem:1st}.
 In other words, Theorem~\ref{theorem:1st} is just a corollary of the existing result.
 However, this correspondence no longer holds for higher-order Markov chains.
\end{remark}

\subsection{Higher-order Markov chains} \label{subsection:higher}

We now consider higher-order Markov chains.
Let $d\geq 1$.
A sequence $(x_1,\ldots,x_d)\in\mathcal{X}^d$ is abbreviated as $x_{1:d}$.
Define $x_{s:t}$ for $s\leq t$ as well.
A $d$-th-order Markov kernel is
a function $w:\mathcal{X}^{d+1}\to\mathbb{R}_{\geq 0}$ such that $\sum_{y\in\mathcal{X}}w(y|x_{1:d})=1$
for any $x_{1:d}\in\mathcal{X}^d$.
Denote the set of strictly positive $d$-th-order Markov kernels by $\mathcal{W}_d$.
Each $w\in\mathcal{W}_d$ is identified with a first-order Markov kernel $\tilde{w}$ on the graph $(\mathcal{X}^d,\mathcal{E})$ with an edge set
\begin{align}
\mathcal{E}=\{(x_{1:d},x_{2:(d+1)})\in\mathcal{X}^d\times\mathcal{X}^d\mid 
x_1,\ldots,x_{d+1}\in\mathcal{X}\}.
\label{eq:lift}
\end{align}
More specifically, define $\tilde{w}\in\mathcal{W}(\mathcal{X}^d,\mathcal{E})$ by
\begin{align}
 \tilde{w}(x_{2:(d+1)}|x_{1:d})
 = w(x_{d+1}|x_{1:d}).
\label{eq:identification}
\end{align}
It is proved in the same manner as Example~\ref{example:strongly-connected}
that $(\mathcal{X}^d,\mathcal{E})$ is strongly connected.
The stationary distribution of $w\in\mathcal{W}_d$ is denoted as $p_w^{(d)}(x_{1:d})$, that is,
\[
 \sum_{x_1} p_w^{(d)}(x_{1:d})w(x_{d+1}|x_{1:d}) = p_w^{(d)}(x_{2:(d+1)}).
\]
%The joint stationary distribution is
%\[
% p_w^{(m)}(x_{1:m}) = p_w^{(d)}(x_{1:d})\prod_{t=d+1}^m w(x_t|x_{(t-d):(t-1)}),\quad m\geq d,
%\]
The first-order stationary distribution $p_w^{(1)}(x_1)$ is well defined by marginalization $\sum_{x_{2:d}}p_w^{(d)}(x_{1:d})$.

\begin{theorem} \label{theorem:2nd}
 Let $H:\mathcal{X}^{d+1}\to\mathbb{R}$ and $r\in\mathcal{P}_+(\mathcal{X})$ be given.
 Then, there exist functions $\kappa\in\mathbb{R}^{\mathcal{X}^d}$ and $\delta\in\mathbb{R}^\mathcal{X}$ such that
 a function
 \begin{align}
  w(x_{d+1}|x_{1:d}) = \exp\left(H(x_{1:(d+1)}) + \kappa(x_{2:(d+1)}) - \kappa(x_{1:d}) - \delta(x_{d+1})\right)
  \label{eq:min-info-2}
 \end{align}
 belongs to $\mathcal{W}_d$
 and its first order stationary distribution is
 \begin{align}
  p_w^{(1)}(x_1) = r(x_1).
  \label{eq:marginal-2}
 \end{align}
\end{theorem}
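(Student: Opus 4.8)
The plan is to reduce Theorem~\ref{theorem:2nd} to the generalized Pythagorean theorem (Lemma~\ref{lemma:Pythagorean}) by working with the lifted first-order chain on $(\mathcal{X}^d,\mathcal{E})$, mimicking the proof of Theorem~\ref{theorem:1st}. Label $\mathcal{X}=\{\xi_1,\ldots,\xi_m\}$ with $m=|\mathcal{X}|$ and consider the exponential family $E\subset\mathcal{W}(\mathcal{X}^d,\mathcal{E})$ generated by $C(x_{1:d},x_{2:(d+1)})=H(x_{1:(d+1)})$ together with $F_i(x_{1:d},x_{2:(d+1)})=-I_{\{\xi_i\}}(x_{d+1})$ for $1\le i\le m-1$. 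Here $C$ is well defined on $\mathcal{E}$ because an edge $(x_{1:d},x_{2:(d+1)})$ determines the whole window $x_{1:(d+1)}$, and each $F_i$ depends only on the last coordinate $x_{d+1}$. Writing a generic element of $E$ in the form (\ref{eq:exp-family}) under the identification (\ref{eq:identification}) and collecting $\sum_{i=1}^{m-1}\theta_iI_{\{\xi_i\}}(x_{d+1})+\psi_\theta$ into one function $\delta(x_{d+1})$ of the last coordinate puts it exactly in the form (\ref{eq:min-info-2}), with $\kappa=\kappa_\theta\in\mathbb{R}^{\mathcal{X}^d}$.

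Next I would translate the two hypotheses of Lemma~\ref{lemma:Pythagorean}. For the moment set $M$, I impose $\sum_{(x_{1:d},x_{2:(d+1)})\in\mathcal{E}}p_{\tilde w}^{(2)}F_i=-r(\xi_i)$ for $1\le i\le m-1$. Using (\ref{eq:identification}), the pair stationary distribution of the lifted chain is the window distribution, $p_{\tilde w}^{(2)}(x_{1:d},x_{2:(d+1)})=p_w^{(d+1)}(x_{1:(d+1)})$, so marginalizing over the first $d$ coordinates turns each constraint into $p_w^{(1)}(\xi_i)=r(\xi_i)$; since both sides are probabilities the value at $\xi_m$ follows automatically, giving (\ref{eq:marginal-2}). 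Non-emptiness of $M$ is witnessed by the i.i.d.\ kernel $w(x_{d+1}|x_{1:d})=r(x_{d+1})$, whose lift is strictly positive on $\mathcal{E}$ and has stationary distribution $r(x_1)\cdots r(x_d)$ with first-order marginal $r$.

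The step that genuinely differs from the first-order case, and the one I expect to be the main obstacle, is verifying that $F_1,\ldots,F_{m-1}$ are linearly independent modulo $\mathcal{N}$ on the \emph{incomplete} graph $(\mathcal{X}^d,\mathcal{E})$. Concretely, I must show that any function of $x_{d+1}$ alone that is of coboundary form $\kappa(x_{2:(d+1)})-\kappa(x_{1:d})-c$ is necessarily constant. The clean way to see this is to evaluate on self-loops: for each $a\in\mathcal{X}$ the diagonal vertex $(a,\ldots,a)$ carries an edge to itself (the length-$(d+1)$ window $(a,\ldots,a)$ lies in $\mathcal{E}$), on which the coboundary collapses to $-c$; hence such a function takes the common value $-c$ at every $x_{d+1}=a$, i.e.\ it is constant. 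Consequently $-\sum_{i=1}^{m-1}a_iI_{\{\xi_i\}}(x_{d+1})\in\mathcal{N}$ forces $a_1=\cdots=a_{m-1}=0$, exactly as in the first-order argument.

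With both hypotheses in place, Lemma~\ref{lemma:Pythagorean} supplies a unique $w_*\in M\cap E$; lying in $E$ it has the form (\ref{eq:min-info-2}), and lying in $M$ it satisfies (\ref{eq:marginal-2}), which completes the proof. The only remaining item is the routine consistency that all single-coordinate marginals of the stationary window distribution coincide with $p_w^{(1)}$; this follows by repeatedly applying the stationarity relation $\sum_{x_1}p_w^{(d)}(x_{1:d})w(x_{d+1}|x_{1:d})=p_w^{(d)}(x_{2:(d+1)})$ to peel off coordinates, and it is what justifies the marginalization used in the second paragraph.
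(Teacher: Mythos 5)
Your proof is correct and follows essentially the same route as the paper: lift to a first-order chain on $(\mathcal{X}^d,\mathcal{E})$, take the exponential family generated by $C=H$ and $F_i=-I_{\{\xi_i\}}(x_{d+1})$, identify the moment constraints with the marginal condition (with the i.i.d.\ kernel $r(x_{d+1})$ witnessing $M\neq\emptyset$), and invoke Lemma~\ref{lemma:Pythagorean}. In fact you go slightly beyond the paper by explicitly verifying, via the self-loop argument at diagonal vertices, that $F_1,\ldots,F_{m-1}$ are linearly independent modulo $\mathcal{N}$ on the incomplete graph --- a hypothesis of Lemma~\ref{lemma:Pythagorean} that the paper's proof leaves implicit.
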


\begin{proof}
 Let $\mathcal{E}$ be the edge set defined by (\ref{eq:lift}).
 As in the proof of Theorem~\ref{theorem:1st}, we denote $\mathcal{X}=\{\xi_1,\ldots,\xi_m\}$.
 Consider an exponential family $E\subset\mathcal{W}(\mathcal{X}^d,\mathcal{E})$ generated by $C(x_{1:d},x_{2:(d+1)})=H(x_{1:(d+1)})$
 and $F_i(x_{1:d},x_{2:(d+1)})=-I_{\{\xi_i\}}(x_{d+1})$ for $1\leq i\leq m-1$.
% It is straightforward to see $F_1,\ldots,F_{m-1}$ are linearly independent modulo $\mathcal{F}^{\rm A}(\mathcal{X}^d,\mathcal{E})\oplus\mathbb{R}$.
 The set $E$ coincides with the set of all functions $w\in\mathcal{W}_d$ of the form (\ref{eq:min-info-2}) under the identification (\ref{eq:identification}).
 Let $M$ be the set of all $\tilde{w}\in\mathcal{W}(\mathcal{X}^d,\mathcal{E})$ satisfying
 \[
 \sum_{(\tilde{x},\tilde{y})\in\mathcal{E}} p_{\tilde{w}}^{(2)}(\tilde{x},\tilde{y}) F_i(\tilde{x},\tilde{y}) = -r(\xi_i),
 \]
 or equivalently,
 \[
 \sum_{x_{1:(d+1)}\in\mathcal{X}^{d+1}}p_w^{(d+1)}(x_{1:(d+1)})I_{\{\xi_i\}}(x_{d+1})=r(\xi_i).
 \]
 This condition is equivalent to (\ref{eq:marginal-2}).
 Since a trivial Markov kernel $r(x_{d+1})$ obviously belongs to $M$, we have $M\neq\emptyset$.
 By Lemma~\ref{lemma:Pythagorean}, there exists a unique $\tilde{w}\in M\cap E$.
 The corresponding $w\in\mathcal{W}_d$ satisfies (\ref{eq:min-info-2}) and (\ref{eq:marginal-2}).
\end{proof}

\begin{definition}[higher-order minimum information Markov chain] \label{definition:min-info-2}
 Let $d\geq 1$, $H:\mathcal{X}^{d+1}\to\mathbb{R}$ and $r\in\mathcal{P}_+(\mathcal{X})$ be given.
 The Markov kernel $w$ determined by (\ref{eq:min-info-2}) and (\ref{eq:marginal-2}) is called 
 a {\em $d$-th order minimum information Markov kernel} generated by $H$ and $r$.
 The function $H$ is referred to as {\em dependence}.
% Let $h_1,\ldots,h_K:\mathcal{X}^2\to\mathbb{R}$ and $r\in\mathcal{P}_+(\mathcal{X})$ be given.
% A family $\{w_\theta\mid\theta\in\mathbb{R}^K\}$ of Markov kernels
% is called the {\em minimum information Markov model} generated by $h_1,\ldots,h_K$ and $r$
% if $w_\theta$ is written as
% \begin{align}
%  w_\theta(y|x) = \exp\left(
%  \sum_{k=1}^K \theta_k h_k(x,y) + \kappa_\theta(y) - \kappa_\theta(x) - \delta_\theta(y)
%  \right).
%  \label{eq:min-info-model}
% \end{align}
% and has the stationary distribution $p_{w_\theta}(x) = r(x)$.
\end{definition}

We provide an example of integer-valued autoregressive processes of higher orders in Section~\ref{section:algorithm}.

\section{Numerical examples} \label{section:algorithm}

We describe an algorithm to find the Markov $I$-projection $w_*$ using Corollary~\ref{corollary:m-projection}.
The minimum information Markov kernels are then numerically obtained.

\subsection{Computation of the Markov $I$-projection}

Consider the exponential family generated by $C,F_1,\ldots,F_K$ and the mixture family $M=M(\mu_1,\ldots,\mu_K)$ as in Lemma~\ref{lemma:Pythagorean}.
The unique intersection $w_*\in M\cap E$ is obtained by the minimization problem
\begin{align}
 \mathop{\rm Minimize}_{\theta\in\mathbb{R}^K}\ \ \psi_\theta - \sum_{k=1}^K \theta_k\mu_k
 \label{eq:algo-1-problem}
\end{align}
as shown in Corollary~\ref{corollary:m-projection}.
The problem is numerically solved by gradient descent algorithms such as the Broyden--Fletcher--Goldfarb--Shanno  (BFGS) algorithm.
The gradient of $\psi_\theta$ is
\begin{align}
 \partial_k\psi_\theta = \sum_{(x,y)\in\mathcal{E}}p_{w_\theta}^{(2)}(x,y)F_k(x,y)
 \label{eq:algo-1-mean}
\end{align}
from Lemma~\ref{lemma:exp-family-properties}.
We summarize the overall algorithm in Algorithm~\ref{algorithm:I-projection}.

\begin{algorithm}[t]
\caption{The Markov $I$-projection}
\label{algorithm:I-projection}
\begin{algorithmic}[1]
\REQUIRE $(\mathcal{X},\mathcal{E})$, $C,F_1,\ldots,F_K\in\mathbb{R}^{\mathcal{E}}$, $\mu_1,\ldots,\mu_K\in\mathbb{R}$ and tolerance $\varepsilon>0$
\ENSURE $\theta$ that solves (\ref{eq:algo-1-problem}), together with $\kappa_\theta,\psi_\theta,p_{w_\theta}$
\STATE $\theta=(0,\ldots,0)\in\mathbb{R}^K$
\REPEAT
\STATE Find $\kappa_\theta$ and $\psi_\theta$ in (\ref{eq:exp-family}) by the Perron--Frobenius theorem
\STATE Find the stationary distribution $p_{w_\theta}$ (see Remark~\ref{remark:stationary})
\STATE Compute $f=\psi_\theta-\sum_k \theta_k\mu_k$
\STATE Compute $g_k=\partial\psi_\theta/\partial\theta_k - \mu_k$ for $1\leq k\leq K$ by using (\ref{eq:algo-1-mean})
\STATE Update $\theta$ using $f$ and $g$ (e.g.\ by the BFGS method)
\UNTIL The reduction of $f$ is within $\varepsilon$
\RETURN $\theta$, $\kappa_\theta$, $\psi_\theta$, $p_{w_\theta}$
\end{algorithmic}
\end{algorithm}

In \cite{HW2016}, parameter estimation of $\theta$ for given observation $\{x_t\}_{t=1}^n$ is considered
and an algorithm based on (\ref{eq:algo-1-problem}) is proposed,
where $\mu_k=\bar{F}_k=(n-1)^{-1}\sum_{t=2}^nF_k(x_{t-1},x_t)$ is the sample mean of the statistic $F_k$.

\subsection{The first-order case}

The first-order minimum information Markov kernel is computed as follows.
Our goal is to find the functions $\kappa$ and $\delta$ in (\ref{eq:min-info})
for given $H$ and $r$.

Let $\mathcal{X}=\{\xi_1,\ldots,\xi_m\}$, $\mathcal{E}=\mathcal{X}^2$,
$C(x,y)=H(x,y)$, $F_i(x,y)=-I_{\{\xi_i\}}(y)$ and $\mu_i=-r(\xi_i)$ for $1\leq i\leq m-1$
as considered in the proof of Theorem~\ref{theorem:1st}.
Then, apply Algorithm~\ref{algorithm:I-projection} to obtain the optimal $\theta=(\theta_1,\ldots,\theta_{m-1})$ together with $\kappa_\theta$ and $\psi_\theta$.
Finally, let
\[
 \kappa(y)=\kappa_\theta(y)
 \quad \mbox{and}\quad 
 \delta(y)=\sum_{i=1}^{m-1}\theta_iI_{\{\xi_i\}}(y)+\psi_\theta.
\]
%We obtain Algorithm~\ref{algorithm:order-1}.
%
%\begin{algorithm}[h]
%\caption{Minimum information Markov kernel of order 1}
%\label{algorithm:order-1}
%\begin{algorithmic}[1]
%\REQUIRE $\mathcal{X}=\{\xi_1,\ldots,\xi_m\}$, $H\in\mathbb{R}^{\mathcal{X}^2}$, $r\in\mathcal{P}_+(\mathcal{X})$ and tolerance $\varepsilon>0$
%\ENSURE $\kappa$ and $\delta$ that satisfy (\ref{eq:min-info}) and (\ref{eq:marginal})
%\STATE Let $\mathcal{E}=\mathcal{X}^2$.
%\STATE Let $C=H$, $F_i(x,y)=-I_{\{\xi_i\}}(y)$, $\mu_i=-r(\xi_i)$ for $1\leq i\leq m-1$
%\STATE Call Algorithm~\ref{algorithm:I-projection} with $(\mathcal{X},\mathcal{E},C,\{F_i\},\{\mu_i\},\varepsilon)$ to obtain $(\theta,\kappa_\theta,\psi_\theta)$
%\RETURN $\kappa=\kappa_\theta$ and $\delta=\sum_{i=1}^{m-1}\theta_iI_{\{\xi_i\}}+\psi_\theta$
%\end{algorithmic}
%\end{algorithm}

\begin{example}[Continuation of Example~\ref{example:INAR(1)}] \label{example:INAR(1)-cont}
 Let $\mathcal{X}=\{0,1,\cdots,N\}$, $H(x,y)=\alpha xy$ and $r(x)=\binom{N}{x}\nu^x(1-\nu)^{N-x}$, as described in Example~\ref{example:INAR(1)}.
 Figure~\ref{fig:INAR(1)} shows a sample path $\{x_t\}_{t=1}^n$, autocorrelation function, partial autocorrelation function and marginal distribution
 of the minimum information Markov kernel $w$
 when $\alpha=-1$, $N=5$, $\nu=0.4$ and $n=365$.
% We used Algorithm~\ref{algorithm:order-1} for computing the Markov kernel.
 The sign of the autocorrelation of order 1 is negative.
% The sign of $\alpha$ coincides with the sign of the autocorrelation of order 1.
% Indeed, the expectation of $h_1(x,y)=xy$ is equal to $\partial_\alpha\Psi_\alpha$,
% which is increasing with $\alpha$.
 This is intuitively explained by the joint distribution of $x_1,\ldots,x_n$:
 \[
  p_w^{(n)}(x_{1:n}) = 
  r(x_1)\exp\left(
   \alpha\sum_{t=2}^n x_{t-1}x_t
   +\kappa(x_n)-\kappa(x_1)-\sum_{t=2}^n\delta(x_t)
  \right).
 \]
 A path with a small value of $\sum_{t=2}^nx_{t-1}x_t$ will be observed more likely since $\alpha<0$.
 The partial autocorrelation is almost zero for lag greater than 1, as expected from the Markov structure.
 \end{example}
 
 \begin{figure}[t]
 \centering
 \includegraphics[width=0.8\textwidth]{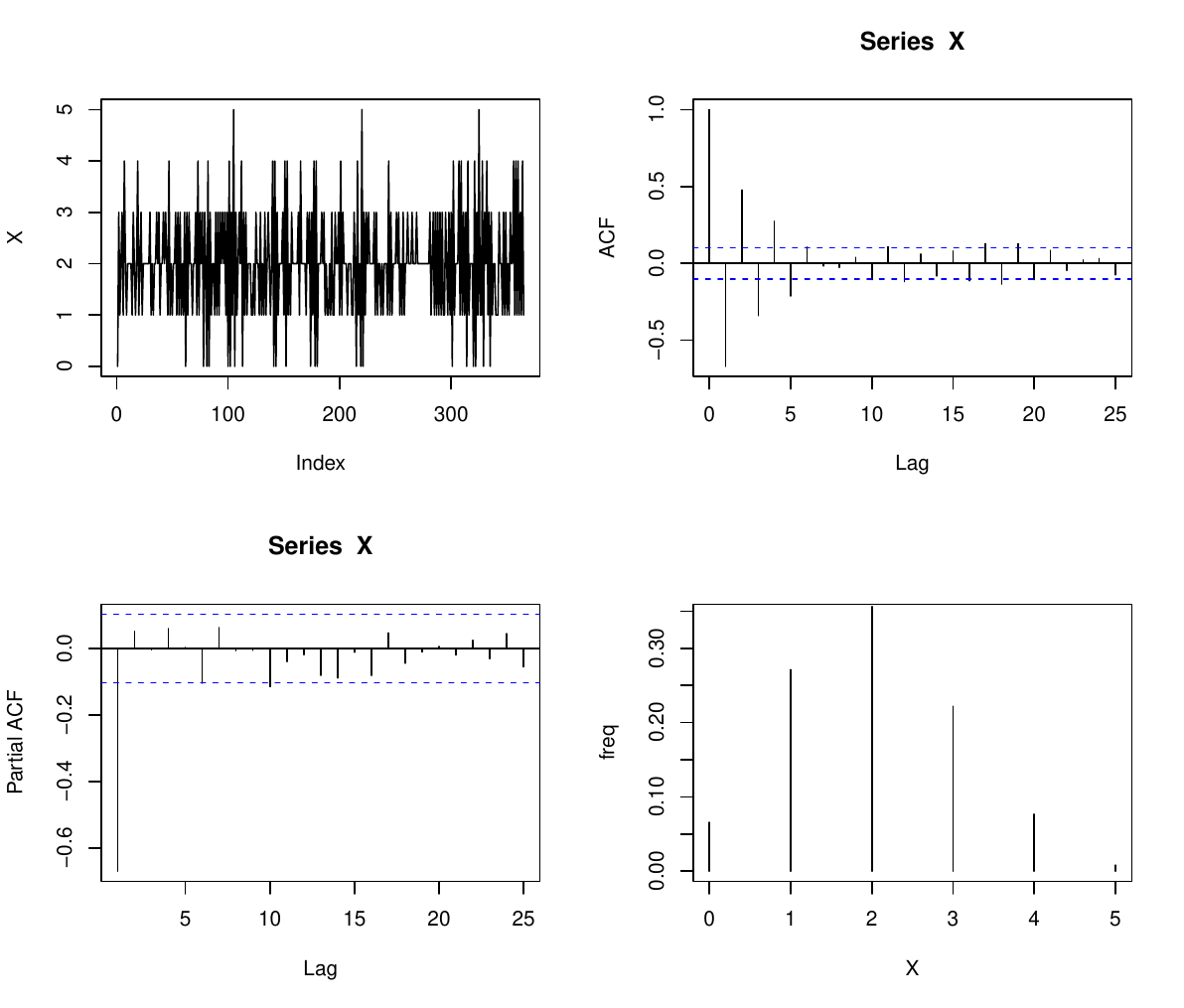}
 \caption{A sample path, autocorrelation function, partial autocorrelation function and marginal distribution of the integer-valued autoregressive process of order 1, where $\alpha=-1$, $N=5$, $\nu=0.4$ and $n=365$.}
 \label{fig:INAR(1)}
 \end{figure}

\subsection{Higher-order cases}

The $d$-th-order minimum information Markov kernels are similarly computed as follows.
The goal is to find the functions $\kappa$ and $\delta$ in (\ref{eq:min-info-2}) and the stationary distribution $p_w^{(d)}$
for given $H$ and $r$.

Let $\mathcal{X}=\{\xi_1,\ldots,\xi_m\}$ and define the edge set $\mathcal{E}\subset\mathcal{X}^d\times\mathcal{X}^d$ by (\ref{eq:lift}).
Let $C(x_{1:d},x_{2:(d+1)})=H(x_{1:(d+1)})$, $F_i(x_{1:d},x_{2:(d+1)})=-I_{\{\xi_i\}}(x_{d+1})$ and $\mu_i=-r(\xi_i)$ for $1\leq i\leq m-1$
as considered in the proof of Theorem~\ref{theorem:2nd}.
Then, apply Algorithm~\ref{algorithm:I-projection} to obtain the optimal $\theta=(\theta_1,\ldots,\theta_{m-1})$ together with $\kappa_\theta$, $\psi_\theta$ and $p_{\tilde{w}_\theta}$,
where $\tilde{w}_\theta$ denotes the element of $\mathcal{W}(\mathcal{X}^d,\mathcal{E})$.
Finally, let
\[
 \kappa(x_{1:d})=\kappa_\theta(x_{1:d}),
 \quad
 \delta(x_{d+1})=\sum_{i=1}^{m-1}\theta_iI_{\{\xi_i\}}(x_{d+1})+\psi_\theta
 \quad \mbox{and}\quad p_w^{(d)}=p_{\tilde{w}_\theta}.
\]

%\begin{algorithm}[h]
%\caption{Minimum information Markov kernel of order $d$}
%\label{algorithm:order-d}
%\begin{algorithmic}[1]
%\REQUIRE $\mathcal{X}=\{\xi_1,\ldots,\xi_m\}$, $H\in\mathbb{R}^{\mathcal{X}^{d+1}}$, $r\in\mathcal{P}_+(\mathcal{X})$ and tolerance $\varepsilon>0$
%\ENSURE $\kappa$ and $\delta$ that satisfy (\ref{eq:min-info-2}) and (\ref{eq:marginal-2})
%\STATE Define $\mathcal{E}$ by (\ref{eq:lift})
%\STATE Let $C=H$, $F_i(x_{1:(d+1)})=-I_{\{\xi_i\}}(x_{d+1})$, $\mu_i=-r(\xi_i)$ for $1\leq i\leq m-1$
%\STATE Call Algorithm~\ref{algorithm:I-projection} with $(\mathcal{X}^d,\mathcal{E},C,\{F_i\},\{\mu_i\},\varepsilon)$ to obtain $(\theta,\kappa_\theta,\psi_\theta)$
%\RETURN $\kappa=\kappa_\theta$ and $\delta=\sum_{i=1}^{m-1}\theta_iI_{\{\xi_i\}}+\psi_\theta$
%\end{algorithmic}
%\end{algorithm}

\begin{example}[An integer-valued autoregressive model of order 2]
 Consider $\mathcal{X}=\{0,1,\cdots,N\}$ with a positive integer $N$.
 Let $H(x,y,z)=\alpha_1 yz + \alpha_2 xz$.
 Let $r$ be the binomial distribution as in the preceding example.
 The minimum information Markov kernel provides a second-order stationary autoregressive model with binomial marginals.
 Figure~\ref{fig:INAR(2)} shows a sample path, autocorrelation function, partial autocorrelation function and marginal distribution
 of the minimum information Markov kernel $w$
 when $\alpha=(0.6, -0.3)$, $N=5$, $\nu=0.4$ and $n=365$.
 The sign of $\alpha_2$ coincides with that of the partial autocorrelation of order~2.
 This result is expected from the construction.
\end{example}

 \begin{figure}[t]
 \centering
 \includegraphics[width=0.8\textwidth]{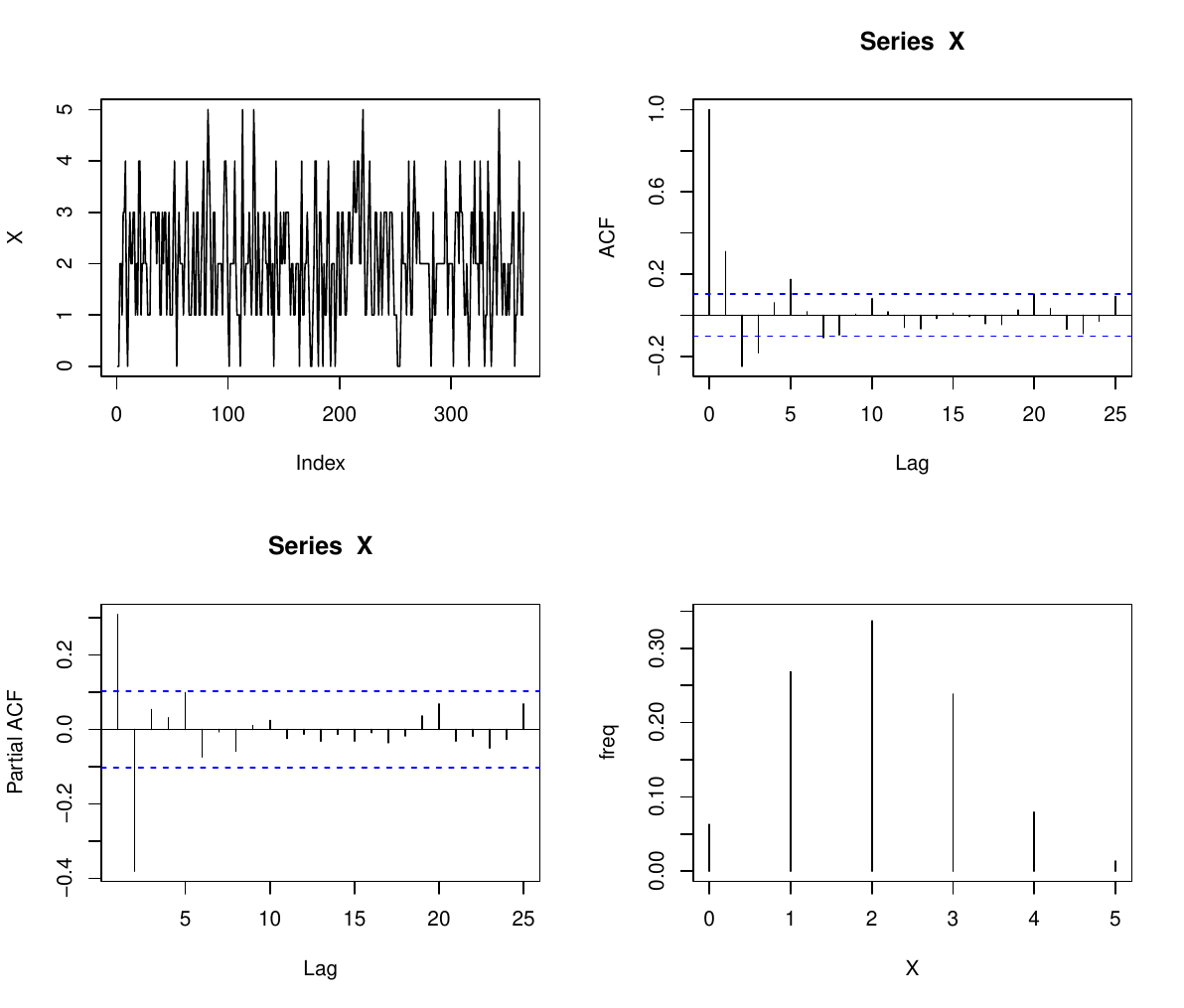}
 \caption{A sample path, autocorrelation function, partial autocorrelation function and marginal distribution of the integer-valued autoregressive process of order 2, where $\alpha=(0.6,-0.3)$, $N=5$, $\nu=0.4$ and $n=365$.}
 \label{fig:INAR(2)}
 \end{figure}

\section{Discussion} \label{section:discussion}

\subsection{Infinite state spaces}

It is conjectured that the above construction of Markov models will be valid even for infinite state spaces under mild regularity conditions.
Technical difficulty comes from the existence part of Lemma~\ref{lemma:Pythagorean}.
It is known that the corresponding theorem for independent sequences holds under fairly general conditions \cite{Csiszar1975,Nutz2022}.
%Roughly speaking, the compactness argument will be applicable since the set of (stationary) probability distributions with given marginal distributions is tight
%as long as the state space is Polish.
%However, further investigation is necessary.

Stationary Gaussian autoregressive models are understood in our framework without the technical difficulty.
Details will be given in a forthcoming paper.

\subsection{Statistical inference}

In this paper, we focused on construction of a Markov chain generated by $H$ and $r$.
In statistics, some parametric models for $H$ and $r$ are assumed
and the parameters are estimated from a given time series data.
Estimators and their sampling properties have to be investigated.

For this problem, we can use the Markov $I$-projection in a similar manner to \cite{HW2016}.
Consider a parametric model of first-order Markov chains on $\mathcal{X}=\{\xi_1,\ldots,\xi_m\}$
specified by $H(x,y)=h_0(x,y)+\sum_{k=1}^K\theta_k h_k(x,y)$, where $\theta$ is the unknown parameter.
Then, the Markov kernel
\[
\exp\left(h_0(x,y)+\sum_{k=1}^K\theta_k h_k(x,y)+\kappa(y)-\kappa(x)-\delta(y)\right)
\]
is considered as an exponential family generated by $C=h_0,F_k=h_k$ for $1\leq k\leq K$
and $F_{K+i}=-I_{\{\xi_i\}}$ for $1\leq i\leq m-1$.
The parameters $\theta$ and $\delta$ are simultaneously estimated by Algorithm~\ref{algorithm:I-projection},
where $\mu_k=\bar{h}_k$ and $\mu_{K+i}=-\bar{I}_{\{\xi_i\}}$ are sample means of the corresponding statistics.

However, for higher-order Markov chains, the curse of dimensionality occurs.
In the minimum information dependence modeling \cite{SY2023}, which is intended for i.i.d.\ data,
a conditional likelihood estimator is considered.
It is a future work to construct a similar estimating method for Markov models.

\section*{Acknowledgements}
The author thanks Keisuke Yano, Kentaro Tanaka and Issey Sukeda for their helpful comments.
This work is supported by 	JSPS KAKENHI 19K11865 and 21K11781.

\newpage
\appendix
\section*{Appendix}
\section{Proof of Lemma~\ref{lemma:Pythagorean}}

%\begin{proof}
 We first prove that the relation (\ref{eq:Pythagorean}) holds if there exists $w_*\in M\cap E$.
 Denote the natural parameter of $v$ and $w_*$ by $\theta$ and $\theta_*$, respectively.
 Then, we have
 \begin{align*}
 & D(w|w_*)+D(w_*|v)-D(w|v)
 \\
 &= \sum_{(x,y)\in\mathcal{E}} (p_w^{(2)}(x,y) - p_{w_*}^{(2)}(x,y))\log\frac{v(y|x)}{w_*(y|x)}
 \\
 &= \sum_{(x,y)\in\mathcal{E}} (p_w^{(2)}(x,y) - p_{w_*}^{(2)}(x,y))
 \\
 &\quad\quad
 \times\left(
 \sum_k(\theta_k-\theta_{*k}) F_k(x,y) + \kappa_\theta(y)-\kappa_\theta(x) -\psi_\theta
 - \kappa_{\theta_*}(y) + \kappa_{\theta_*}(x) + \psi_{\theta*}
 \right)
 \\
 &= 0,
 \end{align*}
 where the last equality follows from the definition of $M$ and stationarity.
 
 Uniqueness of $w_*$ follows from (\ref{eq:Pythagorean}).
 Indeed, if $w\in M\cap E$,
 we can set $v=w$ in (\ref{eq:Pythagorean}) so that $D(w|w_*)+D(w_*|w)=0$,
 which implies $w=w_*$.

 For the proof of existence, we define some notations.
 Let $\mathcal{P}_{\rm s}$ be the set of all stationary joint distributions $p$ on $\mathcal{X}^2$ supported on $\mathcal{E}$,
 which means $\sum_y p(x,y)=\sum_y p(y,x)$ for any $x\in\mathcal{X}$ and $\{(x,y)\in\mathcal{X}^2\mid p(x,y)>0\}=\mathcal{E}$.
 Through Lemma~\ref{lemma:Nagaoka-thm1}, $\mathcal{P}_{\rm s}$ can be identified with $\mathcal{W}$.
 The marginal and conditional distributions of $p\in\mathcal{P}_{\rm s}$ are denoted by $\bar{p}(x)$ and $p(y|x)=p(x,y)/\bar{p}(x)$, respectively.
 The closure and relative boundary of $\mathcal{P}_{\rm s}$ as a subset of $\mathbb{R}^{\mathcal{X}^2}$ are denoted as ${\rm cl}(\mathcal{P}_{\rm s})$ and $\partial\mathcal{P}_{\rm s}$, respectively.
 It follows that $p\in{\rm cl}(\mathcal{P}_{\rm s})$ (resp.\ $p\in\partial\mathcal{P}_{\rm s}$) if and only if the support of $p$ is a subset (resp.\ proper subset) of $\mathcal{E}$.
 
 Choose any $v_0\in E$ and consider a function
 \[
 G(p) = \sum_{(x,y)\in\mathcal{E}} p(x,y)\log\frac{p(y|x)}{v_0(y|x)}
 \]
 of $p\in\mathcal{P}_{\rm s}$, which is nothing but the divergence rate from $p(y|x)$ to $v_0(y|x)$.
 An equivalent form
 \[
 G(p) = \sum_{(x,y)\in\mathcal{E}} p(x,y)\log\frac{p(x,y)}{v_0(y|x)}
 - \sum_{x\in\mathcal{X}} \bar{p}(x)\log\bar{p}(x)
 \]
 is well defined for any $p\in{\rm cl}(\mathcal{P}_{\rm s})$, where $0\log 0=0$,
 and is continuous on ${\rm cl}(\mathcal{P}_{\rm s})$.

 Let us prove that $G$ is strictly convex.
 Take two points $p_0\neq p_1$ in $\mathcal{P}_{\rm s}$
 and let $p_t=(1-t)p_0+tp_1$ for $t\in[0,1]$.
 Then, we have
 \begin{align}
  \frac{d}{dt}G(p_t)
  = \sum_{(x,y)\in\mathcal{E}} (p_1(x,y)-p_0(x,y))\log\frac{p_t(y|x)}{v_0(y|x)}
  \label{eq:G-deriv}
 \end{align}
 and
 \begin{align*}
  \left.\frac{d^2}{dt^2}G(p_t)\right|_{t=0}
  &= \sum_{(x,y)\in\mathcal{E}} (p_1(x,y)-p_0(x,y))\left(\frac{p_1(x,y)-p_0(x,y)}{p_0(x,y)}-\frac{\bar{p}_1(x)-\bar{p}_0(x)}{\bar{p}_0(x)}\right)\\
  &= \sum_{(x,y)\in\mathcal{E}} p_0(x,y)\left(\frac{p_1(x,y)}{p_0(x,y)}-1\right)\left(\frac{p_1(x,y)}{p_0(x,y)}-\frac{\bar{p}_1(x)}{\bar{p}_0(x)}\right)\\
  &= \sum_{(x,y)\in\mathcal{E}} p_0(x,y)\left(\frac{p_1(x,y)}{p_0(x,y)}-\frac{\bar{p}_1(x)}{\bar{p}_0(x)}\right)^2\\
  &\quad\quad + \sum_x\left(\frac{\bar{p}_1(x)}{\bar{p}_0(x)}-1\right) \sum_y  p_0(x,y)\left(\frac{p_1(x,y)}{p_0(x,y)}-\frac{\bar{p}_1(x)}{\bar{p}_0(x)}\right)\\
  &= \sum_{(x,y)\in\mathcal{E}} p_0(x,y)\left(\frac{p_1(x,y)}{p_0(x,y)}-\frac{\bar{p}_1(x)}{\bar{p}_0(x)}\right)^2.
 \end{align*}
 The last quantity is strictly positive since $p_1\neq p_0$ in $\mathcal{P}_{\rm s}$
 implies $p_1(y|x)\neq p_0(y|x)$ for some $(x,y)\in\mathcal{E}$.
 This implies $G$ is strictly convex.
 
 We prove that $G$ is steep at the boundary of $\mathcal{P}_{\rm s}$, which means
 \begin{align}
 \lim_{t\to+0}\frac{d}{dt}G(p_t)=-\infty
 \label{eq:steep}
 \end{align}
 for any $p_0\in\partial\mathcal{P}_{\rm s}$, $p_1\in\mathcal{P}_{\rm s}$ and $p_t=(1-t)p_0+tp_1$.
 Let $\mathcal{E}_0\subsetneq\mathcal{E}$ be the support of $p_0$.
 We prove that $\lim_{t\to +0} p_t(y|x)=0$ for some $(x,y)\in\mathcal{E}\setminus\mathcal{E}_0$,
 which implies (\ref{eq:steep}) due to the expression (\ref{eq:G-deriv}).
 Note that (\ref{eq:G-deriv}) is valid even for $p_0\in\partial\mathcal{P}_{\rm s}$ if $t\in(0,1)$.
 We consider two cases:

 Case (i): Assume $\bar{p}_0(x)>0$ for any $x\in\mathcal{X}$.
 If $(x,y)\in\mathcal{E}\setminus\mathcal{E}_0$, then
 \[
 p_t(y|x)=\frac{p_t(x,y)}{\bar{p}_t(x)}=\frac{tp_1(x,y)}{(1-t)\bar{p}_0(x)+t\bar{p}_1(x)}\to +0
 \]
 as $t\to+0$.
 
 Case (ii): Assume $\bar{p}_0(x)=0$ for some $x\in\mathcal{X}$.
 Let $\mathcal{X}_0\subsetneq\mathcal{X}$ be the support of $\bar{p}_0$.
 Since $\mathcal{E}$ is strongly connected, there exists $(x,y)\in\mathcal{E}\cap(\mathcal{X}_0\times\mathcal{X}_0^{\rm c})$.
 For such $(x,y)$, we have $\bar{p}_0(x)>0$, $p_0(x,y)\leq \bar{p}_0(y)=0$ and $p_1(x,y)>0$ so that
 \[
  p_t(y|x)=\frac{p_t(x,y)}{\bar{p}_t(x)}=\frac{tp_1(x,y)}{(1-t)\bar{p}_0(x)+t\bar{p}_1(x)}\to +0.
 \]

 We have proved that $G$ is continuous on the compact set ${\rm cl}(\mathcal{P}_{\rm s})$, strictly convex and steep.
 Now, let $M_{\rm s}\subset\mathcal{P}_{\rm s}$ be the set of $p\in\mathcal{P}_{\rm s}$ satisfying
 \begin{align*}
 & \sum_{(x,y)\in\mathcal{E}} p(x,y) F_k(x,y) = \mu_k,\quad k=1,\ldots,K.
\end{align*}
 From the above properties of $G$,
 the minimization problem of $G$ over $M_{\rm s}$ has a unique solution $p_*$ whenever $M_{\rm s}\neq\emptyset$.
 Since $M_{\rm s}$ is relatively open,
 the derivative (\ref{eq:G-deriv}) at $t=0$ with $p_0=p_*$ is zero for any $p_1\in M_{\rm s}$.
 This implies
 \[
  \log\frac{p_*(y|x)}{v_0(y|x)} = \kappa(y) - \kappa(x) + \sum_{k=1}^K \theta_k F_k(x,y) - \psi
 \]
 for some $\kappa$, $\theta$ and $\psi$, which are nothing but the Lagrange multipliers. Hence, $p_*(y|x)$ belongs to $M\cap E$.
 This completes the proof.
%\end{proof}

\end{document}